\newcommand{\resp}{{\it resp.} }
\newcommand{\ie}{{\it i.e.} }
\newcommand{\eg}{{\it e.g.} }
\newcommand{\Q}{\mathbf{Q}}
\newcommand{\C}{\mathbf{C}}
\newcommand{\Z}{\mathbf{Z}}
\renewcommand{\epsilon}{\varepsilon}
\renewcommand{\phi}{\varphi}
\renewcommand{\lim}{\varprojlim}
\newcounter{spec}
{\end{list}}
\newtheorem{thm}{Theorem}[subsection]
\newtheorem{lemma}[thm]{Lemma}
\newtheorem{prop}[thm]{Proposition}
\newtheorem{cor}[thm]{Corollary}
\theoremstyle{definition}
\numberwithin{equation}{section}
\renewcommand{\qed}{\hfill $\square$\medskip}
\begin{document}
 \title{A note on 1-motives.}
\author{Yves Andr\'e}
\address{Institut de Math\'ematiques de
Jussieu\\4 place Jussieu\\75005
Paris\\France.}
\email{yves.andre@imj-prg.fr}
 \date{\today}
\keywords{$1$-motive, Nori motive,  motivic Galois group, Mumford-Tate group.}
 \subjclass{19E, 14F, 14D, 14C}
 
 \maketitle

 \begin{sloppypar}

  \begin{abstract} We prove that for $1$-motives defined over an algebraically closed subfield $k$ of $\C$, viewed as Nori motives, the motivic Galois group coincides with the Mumford-Tate group. In particular, the Hodge realization of the tannakian category of Nori motives generated by $1$-motives is fully faithful. 
  
  This result extends an earlier result by the author, according to which Hodge cycles on abelian $k$-varieties are motivated (a weak form of the Hodge conjecture). \end{abstract}

  \bigskip
  \bigskip
 \section{Introduction}
 
\subsection{} The study of points on semiabelian varieties  (\ie extensions of abelian varieties by tori) is a very classical topic of diophantine geometry.
   In algebraic geometry, it also played a crucial role in the guise of Deligne's $1$-motives \cite{D}. Over an algebraically closed subfield $k$ of $\C$, a $1$-motive $[\mathcal L \to \mathcal G]$ is given by a morphism   from a lattice $\mathcal L$ to a semiabelian variety $\mathcal G$ (taking a basis of the lattice, this amounts to the data of a finite number of points on $\mathcal G$). This notion served as a double test ground:
  
   $i)$ for Deligne's theory of mixed Hodge structures if $k=\C$ ($1$-motives form an easily describable  full subcategory of the category of mixed Hodge structures), 
   
   $ii)$ for Grothendieck's dream of mixed motives ($1$-motives are those coming from varieties of dimension $\leq 1$, whence the name ``$1$-motive").  

\smallskip  Nowadays, a well-defined {\it tannakian} category $MM(k)$ of mixed motives with rational coefficients over a field $k\subset \C$ is available in full generality, in two different (independent, but canonically equivalent) versions due to M. Nori \cite{N} and J. Ayoub \cite{Ay1} respectively (see \cite{A3} for a survey). Nori's construction is more elementary and puts in light the universality property of motives, while the geometric origin of morphisms is more apparent in Ayoub's version which is constructed out of Voevodsky's triangulated category.
 Anyway, one knows how to associate unconditionally a {\it motivic Galois group} to any motive over $k$. 
 
 One can attach an object of $MM(k)$ not only to any $k$-variety, but also to $1$-motives over $k$. We denote by $MM(k)_1$ (\resp  $MM(k)_1^{\otimes}$ ) the full subcategory (\resp tannakian subcategory) of $MM(k)$ generated by $1$-motives: objects of $MM(k)_1^{\otimes}$ are constructed from those of $MM(k)_1$ by saturating under tensor products, duals, subquotients.  

\subsection{} The tannakian category $MM(k)$ admits a fiber functor (the {\it Hodge realization}) toward the tannakian category $MHS$ of mixed Hodge structures. By tannakian duality, this provides an injective homomorphism between the Mumford-Tate group of any motive (\ie the tannakian group attached to its Hodge realization) and its motivic Galois group. 

Let us assume that $k$ is {\it algebraically closed}. A version of the Hodge conjecture (the {\it Hodge-Nori conjecture} \cite{Ar}) then predicts that the Hodge realization is full; a slightly more precise version, in terms of tannakian groups, predicts that the motivic Galois group equals the Mumford-Tate group. This is what we prove in this paper in the special case of $1$-motives:

\begin{thm}\label{th} The Hodge realization $  \,MM(k)_1^{\otimes} \to MHS$ is fully faithful, and identifies $MM(k)_1^{\otimes} $ with a tannakian subcategory\footnote{in particular, it is stable under subobjects taken in $MHS$.} of $MHS$.  

A fortiori, the motivic Galois group of any $1$-motive over $k$ coincides with its Mumford-Tate group. 
\end{thm}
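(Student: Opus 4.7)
The plan is to use tannakian formalism together with the weight filtration to reduce the desired equality $G_{MT} = G_{mot}$ (of Mumford-Tate and motivic Galois groups of $MM(k)_1^{\otimes}$) to two independent comparisons: on the reductive quotients and on the unipotent radicals. Equality is equivalent to full faithfulness of the Hodge realization, since the inclusion $G_{MT} \hookrightarrow G_{mot}$ coming from the fiber functor is automatic.

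For the reductive quotients, the weight-graded category $\gr^W MM(k)_1^{\otimes}$ is the tannakian subcategory of pure motives generated by the graded pieces of $1$-motives, namely Artin motives (lattice part, weight $0$), motives of abelian varieties (weight $-1$), and Tate motives (torus part, weight $-2$). For $k$ algebraically closed, Artin motives with $\Q$-coefficients collapse to $\Q$-vector spaces and the Tate case is tautological. The abelian-variety case is precisely the author's earlier result that every Hodge cycle on an abelian $k$-variety is motivated; this forces the motivic Galois group of any abelian motive to coincide with its Mumford-Tate group. Assembling these three contributions gives $G_{mot}^{red} = G_{MT}^{red}$.

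For the unipotent radicals, the key input is Deligne's theorem \cite{D} that the Hodge realization identifies the isogeny category of $1$-motives with a full subcategory of $MHS$, so any extension of pure parts that occurs inside a genuine $1$-motive has matching motivic and Hodge classes. By general tannakian yoga, the Lie algebra of $R_u(G_{mot})$ is built out of the $\Ext^1$-groups between pure subquotients that actually get realized in $MM(k)_1^{\otimes}$; every such extension class can be traced, via the weight filtration and compatibility with tensor constructions, back to an extension inside a $1$-motive. Combined with the reductive equality, this yields $R_u(G_{mot}) = R_u(G_{MT})$ and the theorem.

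The main obstacle is the unipotent comparison: one must verify that Hodge-theoretic extension classes appearing inside tensor powers and subquotients of $1$-motives already come from motivic extensions, rather than from outside $MM(k)_1^{\otimes}$. This is the step where Deligne's full embedding is essential, and where one has to keep careful track of weights across tensor products so that the extensions being compared really are controlled by the $1$-motive category.
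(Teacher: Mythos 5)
Your reduction on the reductive quotients is sound and coincides with the paper's own input: the pure case is exactly the earlier theorem that Hodge cycles on abelian $k$-varieties are motivated (plus the trivial Artin and Tate contributions), and it settles $\gr^W M$. But the unipotent step is where the whole difficulty lives, and there you have asserted the conclusion rather than proved it. Two concrete gaps. First, the ``general tannakian yoga'' you invoke does not exist in the form you use it: $\Ext^1_{\langle M\rangle}(\un,N)\cong H^1(G_{mot},R_B(N))$ controls only the \emph{abelianization} of $R_u(G_{mot})$ as a module under the reductive quotient, not $R_u(G_{mot})$ itself; for $1$-motives the unipotent radical of the Mumford--Tate group is non-abelian in general (cf.\ Bertolin's description), so to deduce $R_u(G_{MT})=R_u(G_{mot})$ from $\Ext^1$ data you would at least need a Frattini-type supplement (a subgroup of a unipotent group surjecting onto its abelianization is the whole group) and then an actual comparison of $\Ext^1(\un,N)$ motivically and Hodge-theoretically for \emph{all} $N$ in $MM(k)_1^{\otimes}$, not only for $1$-motives. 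Second, Deligne's full faithfulness of $DM(k)_1\to MHS$ only governs $\Hom$'s between genuine $1$-motives, whereas the objects $N$ above are arbitrary subquotients of mixed tensor constructions; the claim that every relevant Hodge-theoretic extension class ``can be traced back to an extension inside a $1$-motive'' is precisely the hard point, which you flag as the main obstacle but do not resolve. Note also that full faithfulness on $\Hom$ does not by itself give injectivity on $\Ext^1$ unless the image is stable under subobjects in $MHS$ --- which is part of what the theorem asserts and cannot be assumed.

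The paper deliberately avoids this d\'evissage; indeed its closing remark records (echoing Brylinski) that such a weight-by-weight argument is not known. The actual proof is a deformation argument: after polarizing, $M$ is placed in Brylinski's algebraic family over a mixed Shimura variety $X$ parametrizing polarized $1$-motives with fixed $\gr^W$ and Mumford--Tate group contained in $P=G_{MT}(M)$; the monodromy at the fiber $M$ has Zariski closure $W_{-1}P$, so every Hodge tensor is monodromy-invariant; the motivic theorem of the fixed part of Ayoub/Nori/Jossen (the monodromy group is normal in the motivic Galois group of a motivic local system, and a motivic local system with constant Betti realization is constant) then transports each Hodge tensor to the fiber $M_0=\gr^W M$, where the abelian-variety case applies. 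If you want to salvage your route, you must supply the unipotent comparison by an argument of comparable strength; as written, it is a restatement of the theorem rather than a proof of it.
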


 This confers a genuine motivic content to the description of Mumford-Tate groups of $1$-motives presented in \cite{Be}, and in particular to the notion of deficiency \cite{Ber}. This could also shed some light on P. Jossen's work on the Mumford-Tate conjecture \cite{J} and on several recent works on periods of $1$-motives (see \eg \cite{H-W}) in their relation to the Grothendieck period conjecture (and to our generalization of Grothendieck's conjecture to a non necessarily algebraic ground field $k$ \cite[23.4.1]{A2}).

 \subsection{} 
 In \cite{A1}, we proved that the motivic Galois group of any abelian $k$-variety coincides with its Mumford-Tate group. In that setting, motivic Galois groups were understood in the context of the tannakian category of pure motives $M(k)$ defined in terms of motivated correspondences. According to \cite{Ar}, $M(k)$ is canonically equivalent to the socle of $MM(k)$ (\ie its full subcategory of semisimple objects), which allows to interpret our theorem in \cite{A1} as a confirmation of the Hodge-Nori conjecture for abelian varieties, and Theorem \ref{th} as an extension of it.
In fact, Theorem \ref{th} has the following consequence:
 
 \begin{cor} The tannakian subcategory of $MM(k)_1^{\otimes}$ consisting of semisimple objects is canonically equivalent to the tannakian subcategory of $M(k)$ generated by the motives of abelian varieties. \end{cor}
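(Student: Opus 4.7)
I would realise both tannakian categories as the \emph{same} tannakian subcategory of $M(k)$, exploiting Theorem~\ref{th} together with the canonical equivalence between $M(k)$ and the socle of $MM(k)$ established in \cite{Ar}. The semisimple objects of $MM(k)_1^{\otimes}$ then sit, via this equivalence, as a tannakian subcategory of $M(k)$, and I want to show it is the one tannakian generated by abelian motives.

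\textbf{Identifying the semisimple part.} Let $S$ denote the tannakian subcategory of semisimple objects of $MM(k)_1^{\otimes}$. By Theorem~\ref{th}, $MM(k)_1^{\otimes}$ is a tannakian subcategory of $MHS$, so its semisimple objects are exactly the pure (polarisable) ones, and $\gr^W$ is a tensor-exact functor landing in $S$. Since the tensor/sub/quotient/dual constructions commute with $\gr^W$, the category $S$ is tannakian generated, as a subcategory of $M(k)$, by the graded pieces of $1$-motives. For a $1$-motive $[\mathcal L\to\mathcal G]$ with $0\to\mathcal T\to\mathcal G\to\mathcal A\to 0$ these are
\begin{equation*}
\gr^W_{0}=\mathcal L\otimes\Q\simeq\un^{\oplus r},\qquad
\gr^W_{-1}=H_1(\mathcal A),\qquad
\gr^W_{-2}=H_1(\mathcal T)\simeq \Q(1)^{\oplus s}.
\end{equation*}
Thus $S$ is the tannakian subcategory of $M(k)$ generated by $\Q(1)$ together with the motives $H_1(\mathcal A)$ of abelian varieties.

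\textbf{Identifying the abelian subcategory.} Let $\sA\subset M(k)$ be the tannakian subcategory generated by motives of abelian varieties. Evidently $H_1(\mathcal A)\simeq H^1(\mathcal A)^{\vee}\in\sA$; moreover for any elliptic curve $E$ the Weil pairing identifies $\wedge^2 H_1(E)\iso \Q(1)$ in $M(k)$, so $\Q(1)\in\sA$. Conversely, every summand $h^i(\mathcal A)\simeq \wedge^i H^1(\mathcal A)$ is a tensor construction on $H_1(\mathcal A)$. Hence $\sA=S$ as tannakian subcategories of $M(k)$, giving the canonical equivalence claimed in the corollary.

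\textbf{Main obstacle.} The only subtle bookkeeping is to ensure that the semisimple objects of $MM(k)_1^{\otimes}$ form a tannakian subcategory and that $\gr^W$ commutes with the tannakian operations; both points are bypassed by transporting everything, via Theorem~\ref{th}, into $MHS$, where $\gr^W$ is tensor-exact and pure polarisable Hodge structures form a tannakian subcategory. Beyond that, the argument reduces to the elementary comparison above.
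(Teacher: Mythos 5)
Your argument is correct and follows essentially the route the paper leaves implicit: transport everything into $MHS$ via Theorem \ref{th}, identify semisimplification with $\gr^W$, and observe (as in Section 2 of the paper, where semisimple objects of $MM(k)_1$ are identified with sums of $h_1(A)$, $\Q(0)$, $\Q(1)$ in $M(k)$ via Arapura's equivalence of $M(k)$ with the socle of $MM(k)$) that $\Q(1)$ already lies in the tannakian subcategory generated by abelian varieties. The only slip is cosmetic: the semisimple objects of a tannakian subcategory of $MHS$ are the \emph{direct sums} of pure polarizable structures rather than the pure ones, which does not affect the argument.
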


\subsection{} In the prehistory of the theory of motives, one was limited to morphisms of systems of realizations (a.k.a. absolute Hodge correspondences) instead of morphisms of ``geometric origin" as should be genuine motivic morphisms, in some way. In that weaker context, Deligne proved that the absolute Hodge tannakian group attached to any complex abelian variety coincides with its Mumford-Tate group, and J.-L. Brylinski extended this result to $1$-motives. Our result enhances Brylinski's result to the genuine motivic context, with a completely parallel argument, namely: 

$i)$ we replace Deligne's result by the stronger result that any Hodge cycle on a complex abelian variety is motivated \cite{A1}, translated in terms of Nori motives via \cite{Ar},

$ii)$ we mimick Brylinski's deformation argument, using to a motivic version of the ``theorem of the fixed part" due to Nori, Jossen and Ayoub (independently; since Nori's and Jossen's notes do not seem easily accessible, we rely on Ayoub's published version \cite{Ay1} and the compatibility with Nori's framework \cite{C-GAS}).
   
 \smallskip The progress between Brylinski's theorem and the theorem of this paper is thus a shadow of the progress of the theory of motives in the last 35 years, and can be restated as follows: {\it for $1$-motives, tensor Hodge classes do not only satisfy the expected compatibilities between various realizations, they indeed ``come from geometry"} (in a non-naive sense, more apparent in Ayoub's setting).

  \section{} Let us begin with some preliminaries about $1$-motives and Nori motives.
 As above, let $k$ be an algebraically closed subfield of $\C$  and $MM(k)$ denote the tannakian category of Nori motives over $k$ with rational coefficients \cite{N}\cite{H-MS} (see also \cite{BV-H-P} for a new viewpoint on the tensor structure). The Betti realization provides a fiber functor $R_B: MM(k)\to Vec_\Q$, which is canonically enriched as a  
 fiber functor $R_H: MM(k) \to MHS$ toward the tannakian category of rational mixed Hodge structures.

  There is also a category of effective (Nori) mixed motives $MM^{eff}(k)$, from which $MM(k)$ is constructed by formally inverting the Lefschetz motive. It is not known whether the faithful functor $MM^{eff}(k)\to MM(k)$ is full. 
  
  Let $DM(k)_1$ be the abelian category of Deligne $1$-motives up to isogeny. In  \cite[6.1]{Ay-BV}, it is shown that $DM(k)_1$ is canonically equivalent to a full abelian subcategory of $MM^{eff}(k)$: this is the thick abelian subcategory generated by motives of the form $h_1(X,Y)$ and the unit motive $\Q(0)$. We denote by $MM(k)_1$ its essential image in $MM(k)$.  According to \cite[6.9]{Ay-BV}, the composed functor $$DM(k)_1\to MM^{eff}(k)\to MM(k) \to MHS$$ coincides with the (rational) Hodge realization of $1$-motives constructed by Deligne \cite{D}.  
  
  \begin{prop} This composed functor is fully faithful. A fortiori $DM(k)_1\to MM(k)_1$ is an equivalence.
  \end{prop}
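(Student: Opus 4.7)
\emph{Plan.} This proposition is essentially a reformulation, in the Nori-motives setting, of Deligne's classical full-faithfulness theorem for the Hodge realization of $1$-motives \cite{D} (\cf the later analysis by Brylinski). The plan is: by the cited compatibility \cite{Ay-BV} of the composed functor with Deligne's Hodge realization, it suffices to verify that the latter is fully faithful. The ``a fortiori'' assertion about $DM(k)_1\to MM(k)_1$ then follows: this functor is essentially surjective by construction, faithful because the Betti realization on $MM(k)$ is, and full because the composite is full and $MM(k)\to MHS$ is faithful.

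For full faithfulness of Deligne's realization, I would argue via the weight filtration. Writing the semiabelian part of $M_i=[L_i\xrightarrow{u_i}G_i]$ as an extension $0\to T_i\to G_i\to A_i\to 0$, the three-step weight filtration of $M_i$ has pure graded pieces $T_i$ (weight $-2$), $A_i$ (weight $-1$) and $L_{i,\Q}$ (weight $0$), and the same filtration appears on the Hodge side. A morphism on either side therefore decomposes into morphisms of the pure pieces together with certain compatibility obstructions, which may be read off as classes in suitable $\Ext^1$ groups.

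I would then match the two computations piece by piece. The pure pieces are easy: lattices match trivially, tori by character duality, and abelian varieties by the classical statement $\mathrm{Hom}(A_1,A_2)\otimes\Q=\mathrm{Hom}_{MHS}(H_1(A_1),H_1(A_2))$ over $\C$, combined with the descent fact that $\mathrm{Hom}(A_1,A_2)$ is insensitive to algebraically closed base change (so one can go back from $\C$ to $k$). The compatibility conditions match via Carlson's formula: on the Hodge side one is computing in $\Ext^1_{MHS}(\Q(0),H_1(G))\cong G(\C)\otimes\Q$ (and analogues for $\Ext^1(H_1 A,H_1 T)$), while on the $DM(k)_1$ side the analogous obstructions live in the corresponding groups with $\C$-points replaced by $k$-points. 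Since $k$ is algebraically closed in $\C$, injectivity of $G(k)\otimes\Q\hookrightarrow G(\C)\otimes\Q$ shows that the two conditions are equivalent, yielding the desired bijection on $\mathrm{Hom}$.

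The only step that carries real Hodge-theoretic content is the classical identification for abelian varieties used above --- essentially the equivalence between the isogeny category of complex abelian varieties and the category of polarizable $\Q$-Hodge structures of type $\{(-1,0),(0,-1)\}$. Everything else is routine bookkeeping with the weight filtration, Carlson's formulas, and descent of $k$-points from $\C$ to $k$.
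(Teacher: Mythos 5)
Your proposal is correct and follows essentially the same route as the paper: invoke the compatibility with Deligne's Hodge realization, use Deligne's full-faithfulness theorem over $\C$, descend morphisms from $\C$ to an algebraically closed $k$ via rigidity of $\mathrm{Hom}$ of abelian varieties, and deduce the ``a fortiori'' statement formally from faithfulness of the realization functors. The only (minor) divergence is in the descent step, which the paper dispatches by Cartier duality reducing directly to morphisms of abelian varieties, whereas you carry out an explicit weight-filtration d\'evissage with Carlson's $\Ext^1$ formulas --- more work, but it makes visible that the same two inputs (rigidity for abelian varieties and injectivity of $G(k)\otimes\Q\to G(\C)\otimes\Q$) are all that is used.
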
 
  
  \begin{proof} Deligne actually proved that $DM(k)_1\to MHS$ is fully faithful in the case $k=\C$. The case of an algebraically closed subfield $k$ follows. Indeed let $M_i =[\mathcal L _i\to \mathcal G_i], i= 1, 2,$ be $1$-motives over $k$, each given by a lattice $\mathcal L_i$ and a morphism from $\mathcal L_i$ to  a semi-abelian variety $\mathcal G_i$, extension of an abelian variety $A_i$ by a torus $T_i$. It suffices to show that any morphism  $M_{1\C}  \to  M_{2\C}$ descends to $k$, \ie that the morphism $\mathcal G_{1\C} {\to} \mathcal G_{2\C}$ descends to $k$. By Cartier duality, this amounts to the well-known fact that the induced morphism of abelian varieties $A^\vee_{2\C} \stackrel{f}{\to} A^\vee_{1\C}$ descends to $k$. 
  
  The second statement follows from the first since all involved functors are faithful.
   \end{proof} 
  
   In particular $MM(k)_1$ is abelian. Reminding that the socle of $MM(k)$ is canonically equivalent to the category $M(k)$ of pure motives constructed in \cite{A1} (\cite[6.4]{Ar}, see also \cite[10.2]{H-MS}), we also deduce that any semisimple object of $MM(k)_1$ is isomorphic to a direct sum in $M(k)$ of the motive $h_1(A)$ of an abelian variety and copies of $\Q(0), \Q(1)$.

  \section{} Let $M =[\mathcal L \to \mathcal G]$ be a $1$-motive over $k$, given by a morphism from a lattice $\mathcal L$ to  a semi-abelian variety $\mathcal G$, extension of an abelian variety $A$ by a torus $T$. Up to replacing $M$ by the direct sum of $M$ and its Cartier dual, which changes neither the motivic Galois group, nor the Mumford-Tate group, we may assume that $M$ is symmetric (= polarizable) in the sense of \cite{Br}.
  
     By \cite[0.6.2]{A1} and by the identifications indicated above, the theorem holds for the tannakian subcategory of {\it semisimple} $1$-motives (up to isogeny), in particular for the tannakian subcategory generated by the $1$-motive $M_0 := Gr_W M  = [\mathcal L \stackrel{0}{\to} A\times T]$. Note that the image of $M_0$ in $MM(k)_1$ is the semisimplification of the image of $M$.  Let $P$ be the Mumford-Tate group of $M$, and let us fix a polarization of $M$ (hence of $A$).  
  
  \begin{lemma} Polarized $1$-motives $N$ with $Gr_W N =  M_0$ and Mumford-Tate group contained in $P$ fit into an algebraic family $\mathcal M$ parametrized by a smooth connected $k$-variety $X$.  \end{lemma}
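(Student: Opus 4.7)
The plan is to realize all $N$ of the statement as fibres of a tautological family over a subvariety of the moduli of polarized $1$-motives with fixed graded piece $M_0$, cut out by the Mumford--Tate condition. First I build the ambient moduli $\mathcal{P}$: extensions $0 \to T \to \mathcal{G}_N \to A \to 0$ are classified by the abelian $k$-variety $E := \underline{\Ext}^1(A, T) \cong \operatorname{Hom}(X^\ast(T), A^\vee)$, over which sits a universal semi-abelian scheme $\widetilde{\mathcal{G}} \to E$. Fixing a basis $\ell_1, \dots, \ell_r$ of $\mathcal{L}$, a morphism $\mathcal{L} \to \mathcal{G}_N$ amounts to $r$ sections of $\widetilde{\mathcal{G}}$, so the pairs $(\mathcal{G}_N, \mathcal{L} \to \mathcal{G}_N)$ are classified by the smooth connected commutative $k$-group scheme
\[
\mathcal{P} := \widetilde{\mathcal{G}} \times_E \cdots \times_E \widetilde{\mathcal{G}} \qquad (r \text{ factors}),
\]
carrying a tautological polarized $1$-motive $\widetilde{\mathcal{N}}$ with $Gr_W \widetilde{\mathcal{N}} = (M_0)_{\mathcal{P}}$ (the polarization being defined on the graded, it spreads out automatically). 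Every $N$ as in the statement is isomorphic to some fibre $\widetilde{\mathcal{N}}_p$.

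Next I cut by the Mumford--Tate condition. Over $\mathcal{P}(\C)$ the Betti realization of $\widetilde{\mathcal{N}}$ is an admissible variation of mixed Hodge structure whose underlying $\Q$-local system is constant of fibre $V := R_B(M_0)$; only the Hodge filtration $F^\bullet_p$ depends on $p$, and it does so algebraically via the standard description of the Hodge filtration on $H_1$ of a semi-abelian variety. The Mumford--Tate locus
\[
Y = \bigl\{\, p \in \mathcal{P}(\C) : \operatorname{MT}(\widetilde{\mathcal{N}}_p) \subset P \,\bigr\}
\]
is cut out by the algebraic conditions that a chosen finite family of $P$-invariant tensors in $V^{\otimes\bullet}$ lie in $F^0_p$, hence is a closed algebraic subvariety of $\mathcal{P}_{\C}$. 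Alternatively, Carlson's formula presents the extension class of $\widetilde{\mathcal{N}}_p$ in $\Ext^1_{MHS}$ as an algebraic function of $p$, and the Mumford--Tate condition becomes the condition that this class lie in a fixed $\Q$-linear subspace; in full mixed generality the same algebraicity is guaranteed by Brosnan--Pearlstein. Since the defining tensors are $\Q$-rational on $V$ and $\mathcal{P}$ is defined over $k$, $Y$ descends to a closed $k$-subscheme of $\mathcal{P}$, and because MT-containment is stable under Baer sums it is moreover a translate of an algebraic subgroup of $\mathcal{P}$. Taking $X$ to be the translate of its identity component passing through the point $[M]$ (which lies in $Y$ by hypothesis) gives a smooth connected $k$-subvariety, and $\mathcal{M} := \widetilde{\mathcal{N}}|_X$ is the desired family.

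The main obstacle is the second step: the algebraicity and $k$-rationality of the Mumford--Tate locus in $\mathcal{P}$. In full mixed generality this would require the Brosnan--Pearlstein theorem, but in the $1$-motive setting one should be able to argue directly using Carlson's parametrization of mixed extensions and the explicit shape of the period map for semi-abelian varieties, with some bookkeeping required for the interplay between the lattice $\mathcal{L}$, the extension class of $\mathcal{G}_N$, and the polarization on $A$.
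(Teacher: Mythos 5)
Your overall strategy---build the full moduli $\mathcal P$ of $1$-motives with graded $M_0$ and then cut out the Mumford--Tate locus---is genuinely different from the paper's, and the cut-out step is where it breaks. The paper does not prove the lemma by intersecting a Hodge locus: it cites Brylinski [Br, 2.2.8.6] (and Jossen [J, 1.8]), where $X$ is constructed directly and group-theoretically as the ``mixed Shimura variety'' $W_{-1}P(\Z)\backslash W_{-1}P(\C)/(F^0\cap W_{-1}P(\C))$, a torus bundle over an abelian variety; its algebraicity and $k$-rationality, the tautological family over it, and the fact that it is connected and contains both $M$ and $M_0$ all come for free from that description. The whole point of this construction is to \emph{avoid} having to prove that a Mumford--Tate locus inside an ambient moduli space is algebraic, which is the hard Cattani--Deligne--Kaplan / Brosnan--Pearlstein-type input your argument ends up needing (and which you acknowledge only as an ``obstacle'' rather than resolve).

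Moreover, your stated mechanism for that step contains a concrete error: the $\Q$-local system underlying the tautological family over $\mathcal P(\C)$ is \emph{not} constant. The parameter space is an iterated torus/abelian-variety bundle with large $\pi_1$, and the monodromy on $R_B(\widetilde{\mathcal N})$ is nontrivial --- indeed the paper's main argument depends crucially on this monodromy being Zariski-dense in $W_{-1}P$. (Already for the family $[\Z\xrightarrow{1\mapsto p}E]$ over an elliptic curve $E$, a loop in $E$ shifts the lift of the generator by an element of $H_1(E,\Z)$.) So ``only $F^\bullet_p$ varies on a fixed $V$'' is false, the condition ``a fixed rational tensor lies in $F^0_p$'' is not globally well posed without choosing paths, and the locus you write down is a priori only a countable union of analytic subvarieties. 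Two further points would also need repair: $\mathcal P$ is not a $k$-group scheme in general (the universal $\widetilde{\mathcal G}$ over $\underline{\operatorname{Ext}}^1(A,T)$ is a biextension-type object, not a group over $k$), so ``translate of an algebraic subgroup of $\mathcal P$'' does not parse; and even granting algebraicity, you must show that the connected component through $[M]$ is defined over $k$ and also contains $[M_0]$, since the paper's deformation argument needs $M$ and $M_0$ to be fibers of the \emph{same} connected family. All of this is exactly what the Brylinski--Jossen construction delivers by design.
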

  
  See \cite[2.2.8.6]{Br} (and also \cite[1.8]{J}).  The $1$-motive $M$ (\resp $M_0$) is a fiber of $\mathcal M$ at a $k$-point $x$ (\resp $x_0$) of $X$. 
 The ``mixed Shimura variety"  $X$ is just a torus bundle over an abelian variety, analytically isomorphic to $W_{-1}P(\Z)\backslash W_{-1}P(\C)/ (F^0\cap W_{-1}P(\C))$. In particular the monodromy of the family at $x_1$ is given by the natural action of $W_{-1}P(\Z)$ on $H(M)$ and its Zariski hull is the connected group $W_{-1}P$. Any Hodge (\ie $P$-invariant) tensor is thus invariant under monodromy.  The point $x$ is ``Hodge-generic" in the family in the sense that the Mumford-Tate group of $\mathcal M_x= M$ is maximal, equal to $P$.

Let $L$ be a $P$-stable line in some mixed tensor construction $T^\bullet R_B(M)$ over $R_B(M)$ (with Tate twists).  By (Tate) twisting, one reduces to the case where $L$ is $P$-invariant, \ie generated by a Hodge tensor. We have to show that $L$ is the realization of unit submotive in $T^\bullet M$, knowing that its parallel transport to $x_0$ is the realization of a unit submotive in $T^\bullet M_0$.

   \section{} Let $Y$ be a smooth connected $k$-variety. Let $\mathcal N \in MM(Y)$ be a motivic local system, viewed as a mixed motive over $k(Y)$ unramified over $Y$. Its Betti realization is a local system $R_B(\mathcal N)$ of $\Q$-vector spaces on $Y(\C)$.  Taking the fiber at a point $y\in Y(k)$, one gets in this way a fiber functor $R_{B,y}: MM(Y) \to Vec_\Q$, which is then enriched as a fiber functor $R_{mon,y}: MM(Y) \to Rep_\Q \,\pi_1(Y(\C), y)$ (monodromy realization). 
     Taking tannakian duals, one gets a morphism $G_{mon}(\mathcal N, y)\to G_{mot}(\mathcal N, y)$ (in fact an embedding of closed subgroups of $GL(R_{B,y}(\mathcal N)$), where $G_{mon}(\mathcal N, y)$ is the algebraic monodromy group attached to $R_{mon, y}(\mathcal N)$.

  \begin{prop} $G_{mon}(\mathcal N, y)$ is a normal subgroup of $ G_{mot}(\mathcal N, y)$. If $R_B(\mathcal N)$ is constant, then $\mathcal N$ is constant, \ie is the pull-back of a motive in $MM(k)$.  
  \end{prop}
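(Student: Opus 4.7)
The plan is to reduce both statements to one input: the motivic theorem of the fixed part of Nori--Jossen--Ayoub. In the form we need, this says that for every $\mathcal{P} \in MM(Y)$ there is a canonical largest constant sub-motive $\mathcal{P}^{\mathrm{fix}} \hookrightarrow \mathcal{P}$, pulled back from an object of $MM(k)$, and that its Betti fiber at $y$ coincides with the monodromy-invariant subspace $R_{B,y}(\mathcal{P})^{\pi_1(Y(\C),y)}$. Following the author's conventions, I would invoke it via Ayoub's published version \cite{Ay1}, transported into the Nori framework through the comparison \cite{C-GAS}; this is exactly the input singled out in the introduction.

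The second assertion of the proposition is then immediate. If $R_B(\mathcal{N})$ is constant, every vector of $R_{B,y}(\mathcal{N})$ is monodromy invariant, so the inclusion $\mathcal{N}^{\mathrm{fix}} \hookrightarrow \mathcal{N}$ induces an isomorphism after applying the (faithful, exact) Betti fiber functor, hence is itself an isomorphism in $MM(Y)$; thus $\mathcal{N}$ is a pull-back from $MM(k)$.

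For the normality statement I would use the standard tannakian criterion: a closed subgroup $H \subset G$ of an affine algebraic group is normal if and only if for every finite-dimensional algebraic representation $V$ of $G$, the subspace $V^H$ is $G$-stable. Via tannakian duality, and using that the finite-dimensional algebraic representations of $G_{mot}(\mathcal{N},y)$ are precisely the Betti realizations of objects in the tannakian subcategory $\langle \mathcal{N}\rangle^{\otimes}\subset MM(Y)$, this reduces to showing that for every $\mathcal{P}\in\langle \mathcal{N}\rangle^{\otimes}$, the subspace of $\pi_1(Y(\C),y)$-invariants in $R_{B,y}(\mathcal{P})$ is the Betti realization of a sub-object of $\mathcal{P}$ in $MM(Y)$. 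This is exactly the theorem of the fixed part applied to $\mathcal{P}$, so nothing more is needed.

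The main obstacle I anticipate is not the tannakian bookkeeping but the careful extraction of the theorem of the fixed part in the Nori category of motivic local systems over $Y$: one must verify that the equivalence between Ayoub's and Nori's tannakian setups matches Betti realizations, identifies \emph{constant} with \emph{pulled back from the base} on both sides, and is compatible with the $\pi_1(Y(\C),y)$-action on the Betti fiber. Once this compatibility is granted, both parts of the proposition follow formally.
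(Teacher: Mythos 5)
The paper does not actually reprove this proposition: its ``proof'' is a citation of Ayoub (\cite[th.~40, rem.~41]{Ay2}, proved in \cite[2.57]{Ay1}), transported to Nori motives via \cite{C-GAS}, and both assertions (normality and constancy) are already part of the cited statement. Your derivation of the \emph{second} assertion from a motivic theorem of the fixed part is fine: if every vector of $R_{B,y}(\mathcal N)$ is monodromy-invariant, the inclusion $\mathcal N^{\mathrm{fix}}\hookrightarrow\mathcal N$ becomes an isomorphism after applying the faithful exact functor $R_{B,y}$, hence is one. Your concluding caveat about matching Ayoub's and Nori's settings is also well placed, since that comparison is exactly what \cite{C-GAS} is invoked for.

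The normality argument, however, contains a genuine gap: the ``standard tannakian criterion'' you invoke is false. It is not true that a closed subgroup $H\subset G$ is normal as soon as $V^{H}$ is $G$-stable for every finite-dimensional representation $V$ of $G$. Counterexample: $G=GL_2$ and $H=B$ a Borel subgroup. For every irreducible $V$ the space $V^{B}$ is the intersection of the highest-weight line with the torus-invariants, which is nonzero only for the trivial representation; hence $V^{B}=V^{G}$ for all $V$, always $G$-stable, yet $B$ is not normal. Only the ``only if'' direction of your criterion holds. Correct criteria of this flavour require more: either $G$-stability of all semi-invariant subspaces $V^{H,\chi}$ for every character $\chi$ of $H$, or a realization of $H$ as the pointwise fixer (not merely the stabilizer of a line) of finitely many tensors, which presupposes that $H$ is observable in $G$ and that the relevant characters can be trivialized --- none of which is furnished by the fixed-part statement you take as input. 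This is precisely why in \cite[2.57]{Ay1} the normality is not deduced formally from the invariance of fixed parts but is established directly, by a Hopf-algebraic argument, as part of the same theorem; your proposal in effect assumes an input of essentially the same depth and then derives the harder half of the proposition from it by an invalid group-theoretic lemma.
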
 
  
 \begin{proof} See \cite[th. 40, rem. 41]{Ay2}\footnote{in this reference, Ayoub uses a complex geometric generic point of $Y$ rather than $y$, but functor fibers become isomorphic as usual.}; 
  the proof is given in \cite[2.57]{Ay1} in the context of Ayoub's category of mixed motives, which by \cite{C-GAS} is equivalent to the category of Nori motives.  The result also appears in unpublished works by Nori and by Jossen (in the context of Nori motives properly). \end{proof}
   
\medskip {\it Application}:  let $\mathcal M \in MM(X)$ be the motivic local system attached to the family of $1$-motives of the lemma. Let $\mathcal N \in MM(X)$ correspond to the representation of $ G_{mot}(\mathcal M, x)$ generated by $L$ inside $T^\bullet R_{B,x}(\mathcal M) = T^\bullet R_{B }( M)$. Because $L$ is fixed by $ G_{mon}(\mathcal N, x)$, it follows from the first part of the proposition that $R_B(\mathcal N)$ is a constant local system. By the second part, $\mathcal N$ itself is constant. Since $R_{B, x_0}(\mathcal N)$ contains the parallel transport of $L$ at $x_0$ which is the realization of a unit submotive in $\mathcal N_{x_0} \subset T^\bullet M_0$, we conclude that $L$ is the Betti realization of a unit submotive in $\mathcal N_{x} \subset T^\bullet M$ (which coincides a posteriori with $\mathcal N_x$ itself). This proves Theorem \ref{th}. \qed

\smallskip One may wonder\footnote{as Brylinski already did in his absolute Hodge context \cite[end of 2.2]{Br}.} whether there is a more direct alternative argument by devissage (with respect to the weight) rather than by deformation, in order to perform the reduction to the case of abelian varieties.

 \end{sloppypar}
 \end{document}